\newtheorem {thm}    {Theorem}
\newtheorem {lem}      [thm]    {Lemma}
\newtheorem {cor}  [thm] {Corollary}
\newtheorem {prp}[thm]  {Proposition}
\theoremstyle{definition}
\newtheorem {dfn}      [thm]    {Definition}
\newcounter{AbcT}
\renewcommand{\a}{\alpha}
\renewcommand{\b}{\beta}
\renewcommand{\d}{\delta}
\newcommand{\e}{\varepsilon}
\renewcommand{\l}{\lambda}
\newcommand{\R}{{\bf R}}
\newcommand{\Q}{{\bf Q}}
\newcommand{\Z}{{\bf Z}}
\newcommand {\cL} {{\mathcal L}}
\newcommand {\cA} {{\mathcal A}}
\newcommand {\cB} {{\mathcal B}}
\newcommand {\cS} {{\mathcal S}}
\newcommand{\wt}{\widetilde}
\DeclareMathOperator{\ord}{ord}
\begin{document}
	
\title[Times $2$, $3$ mod $Q$.]{On the multiplicative group generated by two primes in $\Z/Q\Z$}
\author{P\'eter P. Varj\'u}
\address{P\'eter P. Varj\'u, Centre for Mathematical Sciences, Wilberforce Road, Cambridge CB3 0WB, UK}
\email{pv270@dpmms.cam.ac.uk}

\thanks{
The author has received funding from the European Research Council (ERC) under the European Union’s Horizon 2020 research and innovation programme (grant agreement No. 803711). The author was supported by the Royal Society.}

\dedicatory{Dedicated to the memory of Jean Bourgain.}

\begin{abstract}
We study the action of the multiplicative group generated by two prime numbers in $\Z/Q\Z$.
More specifically, we study returns to the set $([-Q^\e,Q^\e]\cap \Z)/Q\Z$.
This is intimately related to the problem of bounding the greatest common divisor of $S$-unit differences,
which we revisit.
Our main tool is the $S$-adic subspace theorem.
\end{abstract}

\maketitle

\section{Introduction}

In this note, we study the multiplicative group $\{p^mq^n:m,n\in\Z\}$ acting on $\Z/Q\Z$,
where $p$ and $q$ are prime numbers and $Q\in\Z_{\ge 2}$ with
$\gcd(Q,pq)=1$.
We are interested in returns to the set $([-Q^\b,Q^\b]\cap\Z)/Q\Z$ for some $\b\in(0,1)$.
That is, we aim to describe the set of $(m,n)\in\Z^2$ such that $p^mq^na=b$
for some $a,b\in ([-Q^\b,Q^\b]\cap\Z)/Q\Z$.
If $a$ and $b$ lifts to integers that are much smaller than $Q^\b$, then small perturbations
of $(m,n)$ will also satisfy the same property.
To eliminate this triviality, we restrict our attention to the following subset of
$([-Q^\b,Q^\b]\cap\Z)/Q\Z$.
\begin{dfn}
We write $B(\b,Q)$ for the set of residues $a\in\Z/Q\Z$ that have lifts
$\wt a$ in $[-Q^\b,Q^\b]\cap\Z$ with $\gcd(\wt a,pq)=1$.
\end{dfn}
Notice that $B(\b,Q)$ contains a canonical representative of each ``short
orbit segment'' intersecting $([-Q^\b,Q^\b]\cap\Z)/Q\Z$ in the following sense.
Given $a\in ([-Q^\b,Q^\b]\cap\Z)/Q\Z$, there 
is $a'\in B(\b,Q)$ and $m',n'\in \Z$ such that $a=p^{m'}q^{n'}a'$
and
\begin{align*}
|m'|\le& \b \log Q/\log p,\\ 
|n'|\le& \b \log Q/\log q.
\end{align*}
The choice of $a'$ is unique provided $\b<1/3$.
This means that, in a sense, to understand returns to $([-Q^\b,Q^\b]\cap\Z)/Q\Z$, it is enough
to understand returns to $B(\b,Q)$.

Our first main result is the following.

\begin{thm}\label{th:main}
Let $p$ and $q$ be two prime numbers, and let $K\in\Z_{\ge 1}$.
Then there is $C\in\R_{>1}$ and $\b\in\R_{>0}$ such that for all $Q\in\Z_{>C}$,
the set of $(m,n)\in\Z^2$ satisfying the conditions
\begin{itemize}
\item $|m|\le K\log Q/\log |p|$,
\item $|n|\le K\log Q/\log |q|$,
\item there are $a,b\in B(\b,Q)$ such that $p^mq^na = b$
\end{itemize}
is contained in a line.

The constant $C$ is ineffective, but $\b$ can be made explicit.
In particular, the theorem holds with $\b=(147K)^{-1}$
with some $C$ that is suitably large depending on $p$, $q$ and $K$.
\end{thm}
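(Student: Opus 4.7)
The plan is to argue by contradiction: suppose that for arbitrarily large $Q$ there exist three solutions $(m_1,n_1), (m_2,n_2), (m_3,n_3)$ not contained in any common line of $\Z^2$, with witnesses $a_i, b_i \in B(\b,Q)$ lifting to integers $\wt a_i, \wt b_i$ of absolute value $\leq Q^\b$, coprime to $pq$, such that $p^{m_i}q^{n_i}a_i = b_i$ in $\Z/Q\Z$. First I would reduce to an integer equation. Writing $m_i = m_i^+ - m_i^-$ and $n_i = n_i^+ - n_i^-$ with non-negative parts of disjoint support and setting $X_i := p^{m_i^+}q^{n_i^+}\wt a_i$ and $Y_i := p^{m_i^-}q^{n_i^-}\wt b_i$, the congruence becomes $Q \mid X_i - Y_i$, and the size constraints give $|X_i|,|Y_i| \leq Q^{2K+\b}$. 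If $X_i = Y_i$ in $\Z$, the coprimality conditions force $(m_i,n_i) = (0,0)$; otherwise $X_i - Y_i = Q k_i$ for a nonzero integer $k_i$ with $|k_i| \leq 2Q^{2K+\b-1}$.

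The core of the proof would be a direct application of the $S$-adic subspace theorem to the resulting integer relations. For each pair $i \neq j$, cross-multiplying gives $Q \mid X_iY_j - X_jY_i$, and the ratio
\[
\frac{X_iY_j}{X_jY_i} \;=\; p^{m_i-m_j}q^{n_i-n_j}\cdot\frac{\wt a_i\wt b_j}{\wt a_j\wt b_i}
\]
is an $S$-unit (for $S = \{p,q\}$) twisted by a rational of height at most $Q^{4\b}$. Non-collinearity of the three points ensures that the difference vectors $(m_i-m_j,n_i-n_j)$ span $\Q^2$, so two of these $S$-units are multiplicatively independent. I would then apply the quantitative $S$-adic subspace theorem of Evertse-Schlickewei-Schmidt to suitable linear forms in the six variables $(X_1,Y_1,X_2,Y_2,X_3,Y_3)$ chosen to exploit simultaneously the mod-$Q$ vanishing of the pairwise determinants and the $S$-unit structure of the $X_i,Y_i$; this should force the six-tuple into a proper linear subspace of $\Q^6$. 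Translating the subspace inclusion via the arithmetic of $\gcd$s of $S$-unit differences -- the Corvaja-Zannier-type machinery the abstract promises to revisit -- should then yield an algebraic identity that cannot hold for $Q$ beyond an ineffective threshold $C$.

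The explicit constant $\b = (147K)^{-1}$ would emerge from balancing the height $Q^{2K+\b}$ of the $X_i,Y_i$ against the divisibility gain $Q^{-1}$ in the subspace-theorem inequality. The main obstacle I anticipate is exactly this bookkeeping: choosing linear forms whose linear independence follows cleanly from non-collinearity, verifying the subspace-theorem hypotheses with sharp enough constants, and carefully tracking the $\b$-error from the non-$S$-unit twists $\wt a_i\wt b_j/(\wt a_j\wt b_i)$, so as to produce a numerical value of $\b$ rather than merely an existence statement. The ineffectivity of $C$ is inherited from the subspace theorem itself.
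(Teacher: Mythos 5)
Your reduction to integer congruences is fine as far as it goes (and note the paper gets away with \emph{two} linearly independent solutions rather than three, since $(0,0)$ is always a solution, so ``not contained in a line'' yields two $\Q$-independent points). The genuine gap is in the core step, where you propose to apply the $S$-adic subspace theorem to ``suitable linear forms in the six variables $(X_1,Y_1,\dots,X_3,Y_3)$.'' No such choice of forms in a space of fixed dimension $6$ can work. The quantitative content of the hypothesis is only that $Q\ge H^{\e}$ with $\e\approx 1/(2K)$, where $H$ is the height of the $S$-unit parts; each linear form divisible by $Q$ buys you a factor $H^{-\e}$, while the height of the integer point you feed into the subspace theorem costs you a factor of order $H$ per coordinate. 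With a bounded number of coordinates and $\e$ small (i.e.\ $K$ large), the product of the forms cannot be made smaller than a negative power of the height, so the subspace theorem's hypothesis is simply not satisfied. This is exactly why every proof in this line (Bugeaud--Corvaja--Zannier, Hern\'andez--Luca, Corvaja--Zannier, Luca, Levin, and the present paper) works in an auxiliary space whose dimension grows like a power of $1/\e$: the paper uses the $N^2$ monomials
\[
\wt y_{l_1,l_2}=\frac{(a_1s_1)^{l_1}(b_1t_1)^{N-1-l_1}(a_2s_2)^{l_2}(b_2t_2)^{N-1-l_2}}{Q},\qquad N=\Big\lfloor\tfrac{32}{7\e}\Big\rfloor,
\]
which are pairwise congruent mod $Q$, so that $N^2-1$ independent difference forms are each divisible by $Q$; the gain $Q^{-(N^2-1)}\le H^{-\e(N^2-1)}$ then beats the height cost $H^{O(N)}$ precisely because $N\gtrsim 1/\e$. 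That construction (Levin's choice of forms) is the missing idea, not mere bookkeeping.

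Two further structural points you would need even after fixing the dimension. First, a single application of the subspace theorem does not finish the argument: it only shows that $(a_1s_1/b_1t_1,\,a_2s_2/b_2t_2)$ lies on one of finitely many algebraic curves, and a second, separate application of the subspace theorem (the paper's Proposition on points restricted to a curve, using a different $3d_1d_2$-dimensional space of monomials and a gcd lemma for the minima $\min(|y_1|_v,|y_2|_v)$) is needed to bound $Q$. Second, the multiplicative-independence input from non-collinearity is used not to choose the linear forms but to rule out alternative (c) of the gcd theorem, i.e.\ to exclude the degenerate case where the two $S$-unit ratios satisfy a short multiplicative relation. Your instinct that the explicit $\b=(147K)^{-1}$ comes from balancing exponents is right, but the balance is $\b\e\le\a$ with $\a=\tfrac{7}{512}\e^2$ and $\e\ge 1/(2K)$, which only makes sense once the high-dimensional construction is in place.
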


As can be seen from the proof, the result remains valid if we require only that $p$ and $q$ are multiplicatively independent
integers instead of being primes.
However, in that more general setting, it is less natural to restrict our study to the set $B(\b,Q)$.
Instead, one might formulate a result in terms of the set $([-Q^\b,Q^\b]\cap\Z)/Q\Z$ in place of $B(\b,Q)$
and replace the conclusion by saying that the resulting set of $(m,n)$ will be contained in a suitable neighborhood
of a line.
We leave this to the interested reader.

Theorem \ref{th:main} has the following corollary.

\begin{cor}\label{cr:order}
Let $p$ and $q$ be two prime numbers.
For an integer $Q\in\Z_{\ge 2}$ with $\gcd(pq,Q)=1$, we write
$\ord(Q)$ for the order of the multiplicative group generated by $p$ and $q$ in $\Z/Q\Z$.
Then
\[
\lim_{Q\to \infty}\frac{\ord(Q)}{(\log Q)^2}=\infty.
\]
\end{cor}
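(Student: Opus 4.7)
The plan is to translate Corollary~\ref{cr:order} into a statement about successive minima of the rank-$2$ sublattice
\[
\Lambda \;:=\; \{(m,n)\in\Z^2 : p^m q^n \equiv 1 \pmod Q\} \subset \Z^2.
\]
Since $(\ord_Q p,0)$ and $(0,\ord_Q q)$ both lie in $\Lambda$, it has full rank and $\ord(Q)=[\Z^2:\Lambda]=\det\Lambda$. Throughout I work with the weighted sup-norm $\|(m,n)\|:=\max(|m|\log p,|n|\log q)$ on $\R^2$, chosen so that the box appearing in Theorem~\ref{th:main} is exactly the closed $\|\cdot\|$-ball of radius $K\log Q$. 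The goal is to lower-bound the two successive minima $\l_1\le\l_2$ of $\Lambda$ with respect to this norm, by $(\log Q)/2$ and $K\log Q$ respectively; Minkowski's second theorem will then give $\ord(Q)\gtrsim K(\log Q)^2$, and letting $K\to\infty$ finishes the proof.

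The bound $\l_1\ge(\log Q)/2$ is elementary and does not use Theorem~\ref{th:main}. Given a nonzero $(m,n)\in\Lambda$, splitting the exponents into positive and negative parts rewrites the congruence as $p^a q^b\equiv p^c q^d\pmod Q$ with $a,b,c,d\ge 0$. Unique factorisation forces the two sides to differ as integers, so $Q$ divides a nonzero integer of absolute value at most $p^{|m|}q^{|n|}$; this yields $|m|\log p+|n|\log q\ge\log Q$, hence $\|(m,n)\|\ge(\log Q)/2$.

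The bound on $\l_2$ is where Theorem~\ref{th:main} enters. Since $1\in B(\b,Q)$ for every $Q\ge 2$, every $(m,n)\in\Lambda$ with $\|(m,n)\|\le K\log Q$ satisfies $p^m q^n\cdot 1=1$ with both factors in $B(\b,Q)$, so by Theorem~\ref{th:main} the set of such lattice points is contained in a fixed line through the origin (the line passes through $(0,0)$ because $(0,0)$ is in the set). Consequently $\Lambda$ cannot contain two $\R$-linearly independent vectors of $\|\cdot\|$-norm at most $K\log Q$, which gives $\l_2>K\log Q$ for every $Q>C(p,q,K)$.

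Finally, Minkowski's second theorem applied to $\Lambda$ and the symmetric convex body $\{x:\|x\|\le 1\}$ of area $4/(\log p\cdot\log q)$ yields
\[
\ord(Q)\;=\;\det\Lambda\;\ge\;\frac{\l_1\l_2}{\log p\cdot\log q}\;\ge\;\frac{K(\log Q)^2}{2\log p\cdot\log q}
\]
for all $Q$ sufficiently large, and taking $K\to\infty$ completes the proof. Since Theorem~\ref{th:main} does all the substantive work, I foresee no real obstacle; the only point worth flagging is that Theorem~\ref{th:main} on its own would yield merely $\ord(Q)\gtrsim\log Q$, and the quadratic lower bound depends crucially on the independent arithmetic estimate for $\l_1$.
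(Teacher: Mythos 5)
Your proof is correct and follows essentially the same route as the paper: the kernel lattice $\Lambda$ with $[\Z^2:\Lambda]=\ord(Q)$, the elementary bound $\l_1\gg\log Q$, the bound $\l_2>K\log Q$ from Theorem~\ref{th:main} applied with $a=b=1\in B(\b,Q)$, and Minkowski's second theorem. You have merely filled in the details (the choice of norm, the divisibility argument for $\l_1$, and the observation that the line must pass through the origin) that the paper leaves implicit.
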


Again, this remains valid if we replace the condition of primality for $p$ and $q$ by multiplicative independence.

Corollary \ref{cr:order} is not a new result.
It is well known to follow from a result of Hern\'andez, Luca \cite{HL} and Corvaja, Zannier \cite{CZ}, which 
we will recall below.

Before that, we discuss how Corollary \ref{cr:order} follows from Theorem \ref{th:main}.
We observe that the set
\[
\Lambda=\{(m,n)\in\Z^2: p^mq^n\equiv 1 \mod Q\}
\]
is a sublatice of $\Z^2$ and its index is $\ord(Q)$.
We write $\l_1$ for the first and $\l_2$ for the second minima of $\Lambda$.
If $(m,n)\in\Lambda$ is non-zero, then necessarily 
\[
|m|\log p+|n|\log q\ge \log Q,
\]
so $\l_1\ge c\log Q$ for some constant $c$ that depends on $p$, $q$ and our choice for the norm
with respect to which the minima are defined.
By Theorem \ref{th:main}, $\l_2/\log Q\to \infty$ as $Q\to \infty$.
Corollary \ref{cr:order} now follows from Minkowski's theorem on successive minima.

Now we discuss some relevant results from the literature.
Bugeaud, Corvaja and Zannier \cite{BCZ}*{Theorem 1} proved that
\[
\gcd(a^n-1,b^n-1)\le \max(a^n,b^n)^{-\e}
\]
for all pair of multiplicatively
independent integers $a, b$ and for all $\e>0$ provided $n$ is sufficiently large depending on $a$, $b$ and $\e$.
This has been extended both by Hern\'andez, Luca \cite{HL}*{Theorem 2.1} and Corvaja, Zannier
\cite{CZ}*{Remark 1} to the case when $a^n$ and $b^n$
are replaced by two multiplicatively independent integers $u$ and $v$ containing prime factors only from a previously fixed set of primes $S$.
They proved that the inequality 
\[
\gcd(u-1,v-1)\le\max(|u|,|v|)^{-\e}
\]
holds provided $\max(|u|,|v|)$ is sufficiently large depending on
$S$ and $\e$.
This result is well known to imply Corollary \ref{cr:order}.
See also Corvaja, Rudnick, Zannier \cite{CRZ} for a related application of these methods
to the multiplicative order of integer matrices $\mod Q$, which contains Corollary
\ref{cr:order}.
A further extension was obtained by Luca \cite{Luc}*{Theorem 2.1}, who allows $u$ and $v$ to be rational numbers that may contain
prime factors outside $S$, provided their product (with multiplicities) is small compared to $\max(|u|,|v|)$.
Furthermore, in this work $u$ and $v$ are allowed to be multiplicatively dependent, provided they have no multiplicative relation
with small exponents.

Corvaja and Zannier \cite{CZ-height-rational-function} developed these ideas
in another direction to estimate the greatest common divisors of rational
functions evaluated at $S$ units.
These results have been extended by Levin \cite{Lev} to higher dimension.

See also the books of Zannier \cite{Zan} and Corvaja, Zannier \cite{CZ-book}, where
some of these results are discussed further.

We introduce some notation.
We fix a set $S$, which consists of a finite number of (finite) primes and the symbol $\infty$.
We write $S_f=S\backslash \{\infty\}$.
For each $v\in S$, we define a valuation $|\cdot|_v$ on $\Q$.
If $v$ is finite and $x\in\Z$, then we set $|x|_v=v^{-m}$, where $m$ is the largest integer
with $v^m|x$, and we extend $|\cdot|_v$ to $\Q$ multiplicatively.
This is the standard $v$-adic absolute value.
We define $|\cdot|_\infty$ to be the standard Archimedean absolute value.
We write $\cS$ for the set of positive integers all of whose prime factors are contained in $S_f$.

Now we can state our second main result, which extends the above mentioned result of Luca \cite{Luc}.

\begin{thm}\label{th:gcd}
For all $\e>0$ and $S$ as above, there are $C\in\R_{>1}$, $\a\in\R_{>0}$ and $N\in\Z_{>0}$
such that the following holds.

Let $a_1,b_1,a_2,b_2\in\Z$ be numbers that are not divisible by any prime in $S_f$.
Let $s_1,t_1,s_2,t_2\in\cS$.
Assume
\[
\gcd(a_1s_1,b_1t_1)=\gcd(a_2s_2,b_2t_2)=1.
\]
Let
\[
H=\max(s_1,t_1,s_2,t_2).
\]
Assume further that
\begin{equation}\label{eq:gcd}
\gcd(a_1s_1-b_1t_1,a_2s_2-b_2t_2)\ge H^\e.
\end{equation}

Then at least one of the following three items holds:
\begin{itemize}
\item[(a)] $H\le C$,
\item[(b)] $\max(a_1,b_1,a_2,b_2)\ge H^\a$,
\item[(c)] there are $n_1,n_2\in\Z$ not both $0$ such that $|n_1|,|n_2|\le N-1$ and
\[
\Big(\frac{a_1s_1}{b_1t_1}\Big)^{n_1}=\Big(\frac{a_2s_2}{b_2t_2}\Big)^{n_2}.
\]
\end{itemize}

The constant $C$ is ineffective, but $\a$ and $N$ can be made explicit.
The theorem always holds (with a suitably large $C$ depending on $\e$ and $S$) provided
\[
N=\Big\lfloor\frac{32}{7\e}\Big\rfloor, \qquad \a=\frac{7}{512}\e^2.
\]
\end{thm}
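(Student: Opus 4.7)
The plan is to argue by contradiction using the $S$-adic subspace theorem. Set $u_i := a_i s_i/(b_i t_i)$ and $d := \gcd(a_1 s_1 - b_1 t_1, a_2 s_2 - b_2 t_2)$. Supposing (a), (b), (c) all fail, I have $H$ large, $A := \max(a_1, b_1, a_2, b_2) \le H^\a$, $d \ge H^\e$, and no multiplicative relation $u_1^{n_1} = u_2^{n_2}$ with $0 < \max(|n_1|, |n_2|) \le N-1$. Since $d \mid a_i s_i - b_i t_i$ and $|a_i s_i - b_i t_i| \le 2A\max(s_i, t_i)$, each $\max(s_i, t_i) \ge H^{\e - \a}/2$, so both ``factors'' carry a nontrivial share of $H$.

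I would apply the subspace theorem to the projective point $\mathbf{x} \in \P^{(N+1)^2-1}(\Q)$ with coordinates $x_{ij} := (a_1 s_1)^i(b_1 t_1)^{N-i}(a_2 s_2)^j(b_2 t_2)^{N-j}$ for $0 \le i, j \le N$. The coprimality hypotheses $\gcd(a_i,b_i) = \gcd(s_i,t_i) = 1$ force $\|\mathbf{x}\|_v = 1$ at every finite place, so $H(\mathbf{x}) = M_1^N M_2^N$ with $M_i := \max(a_i s_i, b_i t_i)$. Take $\cT := S \cup \{v \text{ prime}: v \mid d,\ v \notin S_f\}$; at each $v \in S$, use the monomial forms $L_{ij}^v := x_{ij}$, and at each $v \mid d$ with $v \notin S_f$, the ``Pad\'e'' forms corresponding to $(X_1 - X_2)^i X_2^{N-i}(Y_1 - Y_2)^j Y_2^{N-j}$, which are linearly independent and vanish to order $i+j$ along $\{X_1 = X_2,\ Y_1 = Y_2\}$. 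Evaluating: the $v \mid d$ contributions collapse via the product formula for $d$ into a factor $d^{-N(N+1)^2}$; the Archimedean and $S_f$ contributions telescope via the product formula for $s_i, t_i$ into $(a_1 b_1 a_2 b_2)^{N(N+1)^2/2} \le A^{2N(N+1)^2}$. The subspace-theorem inequality then reduces, using $d \cdot M_1 M_2 \ge H^{1+2\e-\a}/2$ and $A^2 \le H^{2\a}$, to a condition of the shape $1 + 2\e - 3\a > O(\d/(N+1)^2)$, which holds with some explicit $\d > 0$ once $\a$ is suitably small in terms of $\e$ (this is the source of the quadratic dependence $\a = 7\e^2/512$).

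The subspace theorem then confines $\mathbf{x}$ to a bounded finite union of proper linear subspaces; each corresponds to a nontrivial polynomial relation $P(u_1, u_2) = 0$ of bidegree at most $(N, N)$. The main obstacle -- and the place where the explicit value $N = \lfloor 32/(7\e) \rfloor$ is forced -- is to convert such a polynomial relation into a genuine multiplicative identity $u_1^{n_1} = u_2^{n_2}$ with $|n_i| \le N-1$. I would attack this by factoring $P$ into geometrically irreducible factors over $\overline\Q$ and showing that any factor vanishing at $(u_1, u_2)$ must be, up to a nonzero scalar, a binomial $X^{n_1} Y^{n_2} - c$: a non-multiplicative factor would lead, via the $S$-unit equation theorem applied to the sum-of-$S$-integer-monomials equation obtained after substituting $u_i = a_i s_i/(b_i t_i)$ and clearing denominators, or via a second application of the subspace theorem with a modified setup, to a contradiction with the standing assumptions.
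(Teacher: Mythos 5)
Your proposal has the same two-step architecture as the paper's proof (a first application of the subspace theorem to force the point $(u_1,u_2)$ onto one of finitely many curves of bounded degree, then a second step to upgrade a polynomial relation to a multiplicative one or to bound $H$), and your auxiliary point $x_{ij}=(a_1s_1)^i(b_1t_1)^{N-i}(a_2s_2)^j(b_2t_2)^{N-j}$ is essentially the paper's $\wt y$ before the crucial modifications. But there are two genuine gaps.

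First, you apply the subspace theorem over the set of places $\cT=S\cup\{v\,:\,v\mid d\}$. The subspace theorem requires the set of places (and the system of forms) to be fixed in advance; here the primes dividing $d$ vary with the instance, so the ``finite union of proper subspaces'' you obtain is not uniform over instances. Consequently the finite list of polynomials $P$ of bidegree $\le(N,N)$ that you extract depends on $(a_\bullet,b_\bullet,s_\bullet,t_\bullet)$, and since the constant in the second step is ineffective and depends on $P$, the argument does not close. (Quantitative versions of the subspace theorem bound the \emph{number} of subspaces uniformly, not the subspaces themselves, so they do not repair this.) The paper avoids the problem entirely: it divides the monomials by $Q=\gcd(a_1s_1-b_1t_1,a_2s_2-b_2t_2)$ and works in the quotient of $\Q^{N^2}$ by the diagonal, using the forms $y_{l_1,l_2}-y_{l_1^{(v)},l_2^{(v)}}$ for $v\in S$ only. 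Because all the monomials are congruent $\bmod\ Q$, these differences are integers, the height of the resulting integer vector contains no $Q$, and the product over the fixed set $S$ of the coordinate valuations picks up the decisive factor $Q^{-(N^2-1)}$ through the archimedean place. Your ``Pad\'e forms at $v\mid d$'' put the gain in the wrong place.

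Second, your reduction of a non-binomial irreducible factor of $P$ to a contradiction ``via the $S$-unit equation theorem'' does not apply: $u_i=a_is_i/(b_it_i)$ are not $S$-units, since $a_i,b_i$ are arbitrary integers of size up to $H^\a$ with prime factors outside $S$. This step is the content of Proposition \ref{pr:curve}, roughly half the paper; it needs a second, carefully designed subspace-theorem application (with the two competing index sets \eqref{eq:form1}, \eqref{eq:form2} chosen place by place) that again uses the hypothesis $\gcd(a_1s_1-b_1t_1,a_2s_2-b_2t_2)\ge H^\e$ through Lemma \ref{lm:diff}. Your alternative of ``a second application of the subspace theorem with a modified setup'' gestures at the right idea but leaves the essential work, including where the gcd hypothesis re-enters, undone.
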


In fact, we will use in the proof only that $\e$, $N$ and $\a$ satisfy the inequalities
\begin{align}
(N+1)\e >& 2N^2\a+4,\label{eq:Cond1}\\
\e>&16(N-1)\a.\label{eq:Cond2}
\end{align}

This result improves on \cite{Luc}*{Theorem 2.1} in the following aspects.
\begin{itemize}
\item The result in \cite{Luc} is not applicable when $s_1,t_2,s_2,t_2$ are of comparable size.
\item The bound on $\max(a_1,b_1,a_2,b_2)$ in \cite{Luc} is of the form $H^{\a/\log\log H}$.
(Note that $H$ signifies a different quantity in the notation of \cite{Luc}, and we translated the bound to our notation.)
\item We make the value of $N$ explicit.
\end{itemize}

It was observed by Bugeaud, Corvaja and Zannier
that there are infinitely many values of $n$
such that
\[
\gcd(a^n-1,b^n-1)\ge\exp(\exp(c\log n/\log\log n)),
\]
where $a$ and $b$ are multiplicatively independent integers and $c>0$ is an absolute constant,
see the second remark after Theorem 1 in \cite{BCZ}.
This significantly limits the extent of any possible improvement over \eqref{eq:gcd}.
However, in this example, the greatest common divisor is highly composite, and it is not clear how large
a common prime factor of $s_1-1$ and $s_2-1$ can be for some $s_1,s_2\in\cS$.
This question is of particular interest in the context of Corollary \ref{cr:order} if we restrict $Q$ to be a prime.

It follows by the box principle that for any $Q\in\Z_{\ge 1}$ and for any $s\in\Z_{\ge 1}$, there are $a,b\in\Z$ with
$|a|,|b|\le Q^{1/2}$ such that $Q|as-b$.
This shows that we cannot hope to take $\a$ larger than $C\e$ in Theorem \ref{th:gcd} for some constant $C>0$.
However, this still leaves significant room for improvement.

Theorem \ref{th:main} easily follows from Theorem \ref{th:gcd}, which we show now.
\begin{proof}[Proof of Theorem \ref{th:main} assuming Theorem \ref{th:gcd}]
Suppose there are $a_1,b_1,a_2,b_2\in B(\b,Q)$ and
$(m_1,n_1),(m_2,n_2)\in\Z^2$ that are not collinear such that
\[
|m_1|,|m_2|\le\frac{K\log Q}{\log p},\qquad
|n_1|,|n_2|\le\frac{K\log Q}{\log q}
\]
and
\[
p^{m_1}q^{n_1}a_1= b_1,\qquad
p^{m_2}q^{n_2}a_2= b_2.
\]
We show that $Q$ must be bounded by a constant depending on $p$, $q$ and $K$ only.

To this end, we set $S=\{p,q,\infty\}$ and define $s_1,t_1,s_2,t_2\in\cS$ such that $s_1/t_1=p^{m_1}q^{n_1}$
and $s_2/t_2=p^{m_2}q^{n_2}$.
We denote by the same symbols the unique lifts of $a_1,b_1,a_2,b_2$ in $\Z\cap[-Q^{\b},Q^{\b}]$.
We assume without loss of generality that
\[
\gcd(a_1,b_1)=\gcd(a_2,b_2)=\gcd(s_1,t_1)=\gcd(s_2,t_2)=1.
\]
Since $a_1,b_1,a_2,b_2\in B(\b,Q)$, their lifts (denoted by the same symbol)
are not divisible by $p$ or $q$, so we get
\[
\gcd(a_1s_1,b_1t_1)=\gcd(a_2s_2,b_2t_2)=1.
\]
We also note that
\[
\gcd(a_1s_1-b_1t_1,a_2s_2-b_2t_2)\ge Q\ge H^{1/2K},
\]
where
\[
H=\max(s_1,t_1,s_2,t_2)\le \max(p^{|m_1|}q^{|n_1|},p^{|m_2|}q^{|n_2|}).
\]

Now we see that all the assumptions of Theorem \ref{th:gcd} hold with
\[
\e:=\log Q/\log H\ge 1/2K.
\]
Item (b) of the conclusion cannot hold, because
\[
\max(|a_1|,|b_1|,|a_2|,|b_2|)\le Q^\b= H^{\b\e},
\]
provided $\b$ is small enough so that $\b\e\le \a$.

Item (c) also cannot hold, because $(m_1,n_1)$ and $(m_2,n_2)$ are not collinear and this implies that
$a_1s_1/b_1t_1$ and $a_2s_2/b_2t_2$ are multiplicatively independent.
This means that item (a) must hold, which is precisely what we wanted to prove.

For this argument to work we only need that $\b$ is not larger than $\a/\e$.
With $\a=(7/512)\e^2$ and $\e\ge 1/2K$, we see that $\b=1/147K$ is sufficient.
\end{proof}

We prove Theorem \ref{th:gcd} in the next section.
The proof uses Schlickewei's $S$-adic generalization of Schmidt's subspace theorem.
The general approach goes back to the paper of Bugeaud, Corvaja and Zannier \cite{BCZ},
which has been developed further subsequently in
\cites{CRZ,HL,CZ,CZ-height-rational-function,Luc,Lev}.
Our proof makes use of the new construction introduced by Levin \cite{Lev} to choose the
linear forms for which the subspace theorem is applied.

\subsection{Notation}

Throughout the paper we fix a finite set $S$ that consists of some prime numbers
and the symbol $\infty$.
We write $S_f=S\backslash \{\infty\}$.
We write $\cS$ for the set of positive integers, all of whose prime divisors are in $S_f$.

When we have a notation similar to $X_1,\ldots,X_n$, we sometimes write $X_\bullet$ to refer to the whole
sequence, or to a generic element of the sequence.
The exact meaning will always be clear from the context.

The height of an integer vector $x\in\Z^d$ is defined as
\[
H(x_1,\ldots,x_d)=\max(|x_1|_\infty,\ldots,|x_d|_\infty),
\]
where $|\cdot|_\infty$ is the standard Archimedean absolute value on $\Q$.
\subsection*{Acknowledgments}

I am grateful to Elon Lindenstrauss for very helpful discussions on the subject of this note,
and to Yann Bugeaud, Pietro Corvaja, Umberto Zannier and the anonymous referee
for very helpful comments on an earlier
version of this note.

\section{Proof of Theorem \ref{th:gcd}}

The purpose of this section is the proof of Theorem \ref{th:gcd}.
Our main tool is Schmidt's subspace theorem in the following generalized form due to Schlickewei.

\begin{thm}[$S$-adic subspace theorem]
Let $d\in\Z_{\ge 2}$.
For each $v\in S$, let
$L_1^{(v)},\ldots,L_d^{(v)}\in\Q[x_1,\ldots,x_d]$ be linearly independent linear forms.
Then for all $\e>0$, the solutions $(x_1,\ldots,x_d)\in\Z^d$ of the inequality
\begin{equation}\label{eq:S-adic-forms}
\prod_{v\in S}\prod_{j=1}^{d}|L_j^{(v)}(x_1,\ldots,x_d)|_v\le H(x_1,\ldots,x_d)^{-\e}
\end{equation}
lie in a finite union of proper subspaces of $\Q^d$.
\end{thm}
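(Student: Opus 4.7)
The plan is to follow the Schmidt--Schlickewei strategy: an induction on the dimension $d$ that combines $S$-adic geometry of numbers with an auxiliary polynomial construction of Thue--Siegel--Roth type. For $d=2$ the statement reduces to the $S$-adic Roth theorem (Ridout), which I would take as the base case.

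First I would reduce to a parametric form. By rescaling each $L_j^{(v)}$ in $\Q_v^*$, one may assume $\prod_{v\in S}|\det(L_1^{(v)},\ldots,L_d^{(v)})|_v=1$. Dyadically decomposing the possible sizes of the individual terms $|L_j^{(v)}(x)|_v$, it suffices to show the following parametric statement: for any real exponents $c_j^{(v)}$ with $\sum_{v,j}c_j^{(v)}<-\e$ and any parameter $Q\ge 1$, the integer points $x\in\Z^d$ satisfying $|L_j^{(v)}(x)|_v\le Q^{c_j^{(v)}}$ for all $v,j$ and $H(x)\le Q$ lie in a finite union of proper subspaces of $\Q^d$, which may depend on the $c_\bullet^{(\bullet)}$ but not on $Q$.

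Next I would apply adelic geometry of numbers. The adelic convex body cut out by those inequalities has successive minima $\lambda_1(Q)\le\cdots\le\lambda_d(Q)$ over $\Z^d$ satisfying a Minkowski-type product bound $\prod_j\lambda_j(Q)\asymp Q^{\sum c_j^{(v)}}$. Partitioning the values of $Q$ into classes where the ordering and rough magnitudes of the $\lambda_i$ are fixed, and picking a basis of $\Z^d$ realizing those minima, one shows that any solution is supported on the span of the small minima. Two solutions in the same class then lie in a common proper subspace, unless $\lambda_1(Q)$ itself is extremely small, of size at most $Q^{-\delta}$ for some explicit $\delta>0$ depending on $\e$ and $d$.

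The remaining case, and the main obstacle, is to exclude the possibility that $\lambda_1$ is repeatedly this small; this is where the auxiliary polynomial enters. Using the $S$-adic Siegel lemma of Bombieri--Vaaler, I would construct a nonzero integer polynomial $P(x^{(1)},\ldots,x^{(m)})$ in $m$ groups of $d$ variables, multi-homogeneous of prescribed degrees $r_1,\ldots,r_m$, with controlled Gauss norm at each $v\in S$, and whose Taylor expansion along the adelic hyperplanes $L_j^{(v)}=0$ vanishes to high total index. Evaluating $P$ at a rapidly growing sequence of solutions $x^{(1)},\ldots,x^{(m)}$, the inequality \eqref{eq:S-adic-forms} at each $v$ combined with the product formula would force $|P(x^{(1)},\ldots,x^{(m)})|_\infty<1$, hence $P(x^{(1)},\ldots,x^{(m)})=0$ by integrality. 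A Roth-type lemma, or in modern form Faltings' product theorem, then yields a proper subspace of $\Q^d$ containing $x^{(1)}$ and depending only on the $c_\bullet^{(\bullet)}$. The delicate step is calibrating $m$, the multi-degrees $r_i$, and the required index so that Siegel's lemma supplies a polynomial of small enough height while the Roth/product lemma still applies; this is the technical core of Schmidt's proof, and the $S$-adic setting additionally demands that no single valuation absorbs all of the height loss in the estimates.
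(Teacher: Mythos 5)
The paper does not prove this theorem at all: it is quoted as a known result of Schlickewei, with the proof delegated to Bombieri--Gubler (Corollary 7.2.5). So the relevant comparison is between your sketch and the standard Schmidt--Schlickewei argument in that reference, and your outline does correctly name the main ingredients of that argument (reduction to a parametric statement, adelic geometry of numbers and successive minima, an auxiliary multihomogeneous polynomial built by a Siegel-lemma argument with index conditions, and a Roth-type lemma or product theorem to force a subspace). In that sense you are following essentially the known route rather than inventing a different one.

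However, as a proof this is a roadmap, not an argument, and the gap is precisely where you say it is: everything after ``the main obstacle'' is asserted rather than carried out. The choice of $m$, the multidegrees $r_1,\ldots,r_m$, the index to which $P$ must vanish, the verification that the solutions can be chosen with the rapid growth needed for Roth's lemma, and the non-vanishing/smallness estimate $|P(x^{(1)},\ldots,x^{(m)})|_\infty<1$ in the presence of several places are exactly the content of Schmidt's proof, and none of it is executed. Two further points in your reduction are also not innocuous: (i) the dyadic decomposition into exponent tuples $c^{(v)}_j$ and into classes of $Q$ with prescribed ``ordering and rough magnitudes'' of the minima produces infinitely many cases, so finiteness of the union of subspaces does not follow from handling each class separately --- making the exceptional subspaces independent of $Q$ (and of the class) is the heart of the parametric subspace theorem, not a bookkeeping step; (ii) the claim that in the non-degenerate case two solutions in the same class lie in a common proper subspace, with the only remaining danger being $\lambda_1(Q)\le Q^{-\delta}$, compresses the genuinely hard case (all successive minima comparable), which in Schmidt's treatment requires the exterior-power/Davenport-lemma machinery rather than a direct Minkowski bound. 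Given that this theorem is being used as a black box in the paper, the appropriate ``proof'' here is the citation; if you intend to actually prove it, the sketch would need to be expanded by the full quantitative apparatus you have only gestured at.
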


See \cite{BG}*{Corollary 7.2.5} for a proof of this result.
In our applications, we will use the subspace theorem in a finite dimensional vector space $V$ over $\Q$, and to
facilitate the application of the subspace theorem, we need to fix an isomorphism from $V$ to $\Q^d$.
In these applications, there will be no natural choice for this isomorphism, and its exact choice will be largely immaterial.
For this reason, we reformulate the subspace theorem in the following equivalent form.

\begin{thm}\label{th:subspace}
Let $V$ be a $d\in\Z_{\ge 2}$ dimensional vector space over $\Q$.
For each $v\in S$, let $\Lambda_1^{(v)},\ldots,\Lambda_d^{(v)}$ be a basis
of the dual space $V^*$.
Furthermore, let $\Lambda_1^{(0)},\ldots,\Lambda_d^{(0)}$ be another basis of $V^*$.
Then for all $\e$, there is a finite set $\Phi_1,\ldots,\Phi_m\in V^*_{\neq 0}$ such that
every solution of
\[
\prod_{v\in S}\prod_{j=1}^d|\Lambda_j^{(v)}(x)|_v
\le H(\Lambda_1^{(0)}(x),\ldots,\Lambda_d^{(0)}(x))^{-\e}
\]
for $x\in V$ with $\Lambda^{(0)}_j(x)\in\Z$ for all $j=1,\ldots,d$
satisfies $\Phi_i(x)=0$ for some $i\in\{1,\ldots,m\}$.
\end{thm}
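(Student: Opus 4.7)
The plan is to reduce Theorem \ref{th:subspace} to the classical $S$-adic subspace theorem stated just before it, by choosing the isomorphism $V \to \Q^d$ that is naturally supplied by the extra basis $\Lambda_1^{(0)},\ldots,\Lambda_d^{(0)}$. Concretely, I would define the linear isomorphism
\[
\phi:V\to \Q^d,\qquad \phi(x)=(\Lambda_1^{(0)}(x),\ldots,\Lambda_d^{(0)}(x)),
\]
which is bijective because $\Lambda_1^{(0)},\ldots,\Lambda_d^{(0)}$ span $V^*$. Under this map, the condition $\Lambda_j^{(0)}(x)\in\Z$ for every $j$ becomes $\phi(x)\in\Z^d$, and the height appearing on the right-hand side of the inequality is exactly $H(\phi(x))$.

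Next, for each $v\in S$ and each $j=1,\ldots,d$, I would transport $\Lambda_j^{(v)}$ to a linear form on $\Q^d$ by setting
\[
L_j^{(v)}(y)=\Lambda_j^{(v)}(\phi^{-1}(y)),\qquad y\in\Q^d.
\]
Because $\Lambda_1^{(v)},\ldots,\Lambda_d^{(v)}$ form a basis of $V^*$, the forms $L_1^{(v)},\ldots,L_d^{(v)}$ form a basis of $(\Q^d)^*$ and are in particular linearly independent, so the hypotheses of the $S$-adic subspace theorem are satisfied. For any $x\in V$ with $\phi(x)=y$, we have $|\Lambda_j^{(v)}(x)|_v=|L_j^{(v)}(y)|_v$, so the inequality in Theorem \ref{th:subspace} is transformed into \eqref{eq:S-adic-forms} for $y$.

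Applying the $S$-adic subspace theorem then gives a finite collection of proper subspaces $W_1,\ldots,W_m\subset\Q^d$ that contain all integer solutions $y$. Each $W_i$ is contained in the kernel of some non-zero $\psi_i\in(\Q^d)^*$, and setting $\Phi_i=\psi_i\circ\phi\in V^*_{\neq 0}$ gives the desired family: if $x\in V$ satisfies the hypothesis of Theorem \ref{th:subspace}, then $\phi(x)\in W_i$ for some $i$, hence $\Phi_i(x)=\psi_i(\phi(x))=0$.

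There is no real obstacle here — the statement is a coordinate-free rewriting of the classical subspace theorem, and the only thing one must check is that the transported forms $L_j^{(v)}$ remain linearly independent and that the integrality condition and the height function match up under $\phi$, both of which are immediate from the definitions. The only mildly delicate point is remembering to replace the proper subspaces coming from the classical statement by (possibly larger) hyperplanes so that the conclusion can be phrased as the vanishing of a single non-zero linear functional $\Phi_i$.
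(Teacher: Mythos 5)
Your proof is correct and is exactly the routine verification the paper has in mind: the paper states Theorem \ref{th:subspace} as an ``equivalent form'' of the $S$-adic subspace theorem without writing out the reduction, and your argument --- using $\Lambda^{(0)}_\bullet$ to define the isomorphism $\phi$, transporting the forms, and replacing the proper subspaces by hyperplanes --- supplies precisely that omitted reduction.
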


In our proof of Theorem \ref{th:gcd}, the first application of the subspace theorem will yield
a finite collection of polynomials in two variables depending only on $\e$ and $S$ such that one of the polynomials
must vanish at the point $(a_1s_1/b_1t_1,a_2s_2/b_2t_2)$ for any putative counterexample to the theorem.
After this, a second application of the subspace theorem will be needed to conclude the proof.
This second part of the proof amounts to proving the following statement.

\begin{prp}\label{pr:curve}
For all $\e>0$ and $S$ as above, there are $\a\in\R_{>0}$ and $N\in\Z_{>0}$
such that the following holds.
Fix a polynomial $P\in\Q[x_1,x_2]$ of degree at most $N-1$.
Then there is $C$ (depending on $P$, $S$ and $\e$) such that the following holds.

Let $a_1,b_1,a_2,b_2\in\Z$ be numbers that are not divisible by any prime in $S_f$.
Let $s_1,t_1,s_2,t_2\in\cS$.
Assume
\[
\gcd(a_1s_1,b_1t_1)=\gcd(a_2s_2,b_2t_2)=1
\]
and
\[
P\Big(\frac{a_1s_1}{b_1t_1},\frac{a_2s_2}{b_2t_2}\Big)=0.
\]
Let
\[
H=\max(s_1,t_1,s_2,t_2).
\]
Assume further that
\[
	\gcd(a_1s_1-b_1t_1,a_2s_2-b_2t_2)\ge H^\e.
\]

Then at least one of the following three items holds:
\begin{itemize}
	\item[(a)] $H\le C$,
	\item[(b)] $\max(a_1,b_1,a_2,b_2)\ge H^\a$,
	\item[(c)] there are $n_1,n_2\in\Z$ not both $0$ such that $|n_1|,|n_2|\le N-1$ and
	\[
	\Big(\frac{a_1s_1}{b_1t_1}\Big)^{n_1}=\Big(\frac{a_2s_2}{b_2t_2}\Big)^{n_2}.
	\]
\end{itemize}

The constant $C$ is ineffective, but $\a$ and $N$ can be made explicit.
The proposition always holds (with a suitably large $C$ depending on $\e$, $S$ and $P$) provided
\[
N=\Big\lfloor\frac{32}{7\e}\Big\rfloor, \qquad \a=\frac{7}{512}\e^2.
\]
\end{prp}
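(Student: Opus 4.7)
The plan is to apply Theorem \ref{th:subspace} a second time, now exploiting the constraint that $(u_1,u_2)$ lies on the curve $\{P=0\}$ with $u_i:=a_is_i/b_it_i$; this is the strategy of Levin \cite{Lev}. Suppose for contradiction that none of (a), (b), (c) holds, and set $D:=\gcd(a_1s_1-b_1t_1,a_2s_2-b_2t_2)\ge H^\e$. The coprimality hypothesis $\gcd(a_is_i,b_it_i)=1$ forces $p\nmid b_1t_1b_2t_2$ for every prime $p\mid D$, and hence $u_1\equiv u_2\equiv 1\pmod p$ in $\Q_p$ with $|u_i-1|_p\le|D|_p$. By replacing $P$ with an irreducible factor vanishing at $(u_1,u_2)$, one may assume $P$ is irreducible.

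Fix an integer $M$ comparable to $N$. Let $W=\Q[x_1,x_2]_{\le M}/\bigl((P)\cap\Q[x_1,x_2]_{\le M}\bigr)$ of dimension $d$, viewed inside $\Q^{\binom{M+2}{2}}$ via the monomial basis. Form the integer vector $y=(D_0\, u_1^iu_2^j)_{i+j\le M}\in W$ with $D_0$ a suitable power of $b_1t_1b_2t_2$. Failure of (b) yields $|y_{ij}|\le H^{O(M)(1+\a)}$, hence $H(\Lambda^{(0)}_\bullet(y))\le H^{O(M)(1+\a)}$ for a reference basis $\Lambda^{(0)}_\bullet$ of $W^*$ consisting of $d$ well-chosen coordinate projections.

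At each place $p\in S':=S\cup\{p:p\mid D\}$, I would choose a basis $\Lambda^{(p)}_\bullet$ of $W^*$ adapted to the decreasing filtration $W\supseteq W_1\supseteq\cdots$, where $W_k$ is the set of classes admitting a polynomial representative of degree $\le M$ vanishing at $(1,1)$ to order $\ge k$. A basis element lying in $W_{k_j}\setminus W_{k_j+1}$ satisfies $|\Lambda_j^{(p)}(y)|_p\le|D|_p^{k_j}$ up to a bounded $p$-adic factor, and a dimension count gives a total vanishing budget $\sum_jk_j\ge T(M,P)\asymp M^2$. Setting $\Lambda^{(\infty)}_\bullet=\Lambda^{(0)}_\bullet$ and multiplying over $v\in S'$ via the product formula $\prod_{p\mid D}|D|_p=1/D$,
\[
\prod_{v\in S'}\prod_j|\Lambda_j^{(v)}(y)|_v \le D^{-T(M,P)}\cdot H^{O(Md)(1+\a)}\le H^{-\e T(M,P)+O(Md)(1+\a)}.
\]
The choices $N=\lfloor 32/(7\e)\rfloor$, $\a=(7/512)\e^2$ together with a tuned $M\asymp N$, through \eqref{eq:Cond1}--\eqref{eq:Cond2}, arrange that this bound is at most $H(\Lambda^{(0)}_\bullet(y))^{-\e'}$ for some $\e'>0$. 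Theorem \ref{th:subspace} then forces $y$ into a proper subspace of $W$, producing a polynomial $\Phi\notin(P)$ of degree $\le M$ with $\Phi(u_1,u_2)=0$. If $\Phi$ is of the form $x_1^{n_1}-x_2^{n_2}$ with $|n_i|\le N-1$, conclusion (c) holds, contrary to assumption. Otherwise $\gcd(P,\Phi)=1$ by irreducibility of $P$, and B\'ezout confines $(u_1,u_2)$ to a finite set; the coprimality of $a_is_i$ with $b_it_i$ then forces $a_is_i$ and $b_it_i$ into a finite set, bounding $H$ and contradicting the failure of~(a).

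The principal obstacle is the dimension count producing $T(M,P)\asymp M^2$: one must organise the filtration of $W$ by Taylor expansion at $(1,1)$ tightly enough to beat the $O(Md)(1+\a)$ height loss from the reference basis, and handle the case $(1,1)\notin\{P=0\}$ in which classes in $W$ admit representatives vanishing to various finite orders at $(1,1)$. Matching the explicit exponents in the statement requires careful bookkeeping through \eqref{eq:Cond1}--\eqref{eq:Cond2}. A secondary technicality is that $S'$ depends on the input $D$; this is harmless since Theorem \ref{th:subspace} only needs to produce some proper subspace, and the finite-set bound on $(u_1,u_2)$ follows from $\Phi$ having bounded degree combined with the arithmetic constraints, so the dependence is absorbed into $C$.
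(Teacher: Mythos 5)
Your strategy diverges from the paper's at the crucial point of how the gcd is fed into the subspace theorem, and the route you choose has a genuine gap. You enlarge the set of places to $S'=S\cup\{p:p\mid D\}$ and extract the gain $D^{-T}$ from the places dividing $D$ via vanishing of your forms at $(1,1)$. But the subspace theorem (Theorem \ref{th:subspace}) is applied with a \emph{fixed} finite set of places and fixed forms, and its exceptional subspaces depend on that data. Since $S'$ varies with the input, there are infinitely many possible configurations, so the polynomials $\Phi$ you extract do \emph{not} range over a finite collection independent of $a_\bullet,b_\bullet,s_\bullet,t_\bullet$. Consequently B\'ezout only confines $(u_1,u_2)$ to a finite set \emph{depending on the input}, and you cannot conclude that $H\le C$. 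Your remark that this dependence is ``harmless'' is exactly backwards: the uniformity of the exceptional forms is the whole point of the final step, and the paper goes out of its way to preserve it (it uses only the places of $S$, notes that there are finitely many possible choices of $\Lambda^{(v)}_\bullet$ — two per place — and takes the union of the resulting $\Phi_\bullet$). To make your route work you would need a version of the subspace theorem with height bounds on the exceptional subspaces uniform over the set of places, which is a substantially stronger tool than the one available here.

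For contrast, the paper never touches the primes dividing $D$: it works on the $3d_1d_2$-dimensional space cut out by the translates $\Psi_{m_1,m_2}$ of $P$, uses coordinate projections $y\mapsto y_{l_1,l_2}$ indexed by one of two ``staircase'' sets \eqref{eq:form1}, \eqref{eq:form2} (Levin's construction), and extracts the factor $Q^{-|\cB|}$ from Lemma \ref{lm:diff}, i.e.\ from the Archimedean observation that two distinct integers congruent mod $Q$ cannot both be smaller than $Q/2$. Two further problems with your write-up: (1) the quantitative core is left open — your budget $T(M,P)$ versus the loss $O(Md)$ with $d\asymp MN$ does not visibly close, and you do not specify the forms at the finite places of $S$, which is where the product formula must be made to telescope (this is precisely what the $\cA$, $\cB$, $\cB+(d_1,k-i)$ decomposition accomplishes); (2) hypothesis (c) is never used in an essential way in your argument (your case ``$\Phi=x_1^{n_1}-x_2^{n_2}$'' is vacuous once $\Phi\notin(P)$ and $P$ is irreducible), whereas it must enter somewhere — in the paper it guarantees $\wt y_{l_1,l_2}\neq\wt y_{l_1+d_1,l_2+k-i}$ so that Lemma \ref{lm:diff} applies — since the statement is false without it.
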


Notice that this is just a restatement of Theorem \ref{th:gcd} with the additional
assumption that the point $(a_1s_1/b_1t_1,a_2s_2/b_2t_2)$ is restricted to a curve.
This result is unlikely to be either new or optimal.
However it suffices for our purposes and the proof is simple, so we include it after we showed how
Theorem \ref{th:gcd} can be reduced to it.

The construction of the linear forms in the following proof is essentially a special
case of the construction of Levin \cite{Lev}*{Proof of Theorem 3.2}

\begin{proof}[Proof of Theorem \ref{th:gcd} assuming Proposition \ref{pr:curve}]
Let $\e\in\R_{>0}$, and let $\a\in\R_{>0}$ and $N\in \Z_{>0}$ satisfy
\eqref{eq:Cond1}--\eqref{eq:Cond2}.
We also fix some $a_1,b_1,a_2,b_2,s_1,t_1,s_2,t_2$ that satisfy all hypotheses
of Theorem \ref{th:gcd} and which fail items (b) and (c) of the conclusion.
We aim to show that item (a) of the conclusion holds, that is, $H\le C$ for some constant
$C$ depending only on $\e$ and $S$.

We let
\[
Q=\gcd(a_1s_1-b_1t_1,a_2s_2-b_2t_2).
\]
We assume, as we may that $Q$ is not divisible by any prime in $S_f$.
If we had $p|Q$ for some $p\in S_f$, then necessarily $p\nmid s_1t_1s_2t_2$, and we
could just omit $p$ from $S$.

In what follows, we consider the space $\Q^{N^2}\equiv\Q^{\{0,\ldots,N-1\}^2}$,
and write
\[
y=(y_{l_1,l_2})_{l_1=0,\ldots,N-1,l_2=0,\ldots,N-1}
\]
for its typical element.
We will apply the subspace theorem for the quotient space
\[
V=\Q^{\{0,\ldots,N-1\}^2}/\{(z,z,\ldots,z)\}.
\]

We will evaluate the linear forms at the point $\wt y\in V$ whose coordinates are
\[
\wt y_{l_1,l_2}
=\frac{a_1^{l_1}s_1^{l_1}b_1^{N-1-l_1}t_1^{N-1-l_1}a_2^{l_2}s_2^{l_2}b_2^{N-1-l_2}t_2^{N-1-l_2}}{Q}.
\]
Strictly speaking, this specifies a point in $\Q^{\{0,\ldots,N-1\}^2}$, but we do not
distinguish $\wt y$ from its projection to $V$ in our notation.

For each $v\in S$, let $(l_1^{(v)},l_2^{(v)})$ be such that
$|\wt y_{l_1,l_2}|_v$ is minimal for $(l_1,l_2)=(l_1^{(v)},l_2^{(v)})$.
We define the set of linear forms $\Lambda^{(v)}_\bullet\in V^{*}$ to be an enumeration of
the forms
\[
y\mapsto y_{l_1,l_2}-y_{l_1^{(v)},l_2^{(v)}}
\]
for $(l_1,l_2)\in\{0,\ldots,N-1\}^2\backslash(l_1^{(v)},l_2^{(v)})$.
It is easy to verify that these are indeed in $V^*$, that
is they are constant on cosets of the line $\{(z,z,\ldots,z)\}$, and that
they also form a basis.

We also define $\Lambda^{(0)}_\bullet=\Lambda^{(\infty)}_\bullet$, say.
We note that
\[
\wt y_{l_1,l_2}-\wt y_{l_1',l_2'}\in\Z
\]
for all $l_1,l_2,l_1',l_2'\in\{0,\ldots,N-1\}$, since
\[
a_1s_1\equiv b_1t_1,\quad a_2s_2\equiv b_2t_2\mod Q,
\]
and hence
\[
a_1^{l_1}s_1^{l_1}b_1^{N-1-l_1}t_1^{N-1-l_1}a_2^{l_2}s_2^{l_2}b_2^{N-1-l_2}t_2^{N-1-l_2}\mod Q
\]
is independent of $l_1$ and $l_2$.
For this reason
\[
\Lambda^{(0)}_\bullet(\wt y)\in\Z^{N^2-1}.
\]

We observe that
\[
|\wt y_{l_1,l_2}-\wt y_{l_1^{(v)},l_2^{(v)}}|_v\le C_v |\wt y_{l_1,l_2}|_v
\]
for each $(l_1,l_2)$ and $v\in S$, where $C_v=1$ if $v$ is finite and $C_\infty=2$.
This means that we have
\begin{equation}\label{eq:fraction}
\prod_{v\in S}{\prod}^\bullet|\Lambda_\bullet^{(v)}(\wt y)|_v
\le2^{N^2-1}\frac{\prod_{v\in S}\prod_{l_1=0}^{N-1}\prod_{l_2=0}^{N-1}|\wt y_{l_1,l_2}|_v}
{\prod_{v\in S}|\wt y_{l_1^{(v)},l_2^{(v)}}|_v}.
\end{equation}
Here $\prod^\bullet$ signifies multiplication over the index suppressed by the $\bullet$ notation. 

We first estimate the numerator in \eqref{eq:fraction}.
For each $(l_1,l_2)\in\{0,\ldots,N-1\}^2$, we have
\[
\prod_{v\in S}|\wt y_{l_1,l_2}|_v
=\frac{|a_1^{l_1}b_1^{N-1-l_1}a_2^{l_2}b_2^{N-1-l_2}|_\infty}{|Q|_\infty}
\le H^{2(N-1)\a}Q^{-1}.
\]
This gives us
\[
\prod_{v\in S}\prod_{l_1=0}^{N-1}\prod_{l_2=0}^{N-1}|\wt y_{l_1,l_2}|_v
\le H^{2(N-1)N^2\a}Q^{-N^2}.
\]

Next, we estimate the denominator in \eqref{eq:fraction}.
We note that
\[
|\wt y_{l_1^{(v)},l_2^{(v)}}|_\infty\ge Q^{-1}.
\]
Furthermore, we have
\[
|\wt y_{l_1^{(v)},l_2^{(v)}}|_v\ge |s_1^{N-1}t_1^{N-1}s_2^{N-1}t_2^{N-1}|_v
\]
for all finite $v\in S$, hence
\[
\prod_{v\in S_f}|\wt y_{l_1^{(v)},l_2^{(v)}}|_v\ge s_1^{-N+1}t_1^{-N+1}s_2^{-N+1}t_2^{-N+1}
\ge H^{-4(N-1)}.
\]
(Here we used that $Q$ is not divisible by any prime in $S$.)

Combining our estimates for the numerator and denominator in \eqref{eq:fraction},
we get
\begin{align*}
\prod_{v\in S}{\prod}^\bullet|\Lambda_\bullet^{(v)}(\wt y)|_v
\le&2^{N^2-1} Q^{-N^2+1} H^{2(N-1)N^2\a+4(N-1)}\\
\le& 2^{N^2-1} H^{2(N-1)N^2\a+4(N-1)-(N^2-1)\e}.
\end{align*}
We write
\[
2\d=(N^2-1)\e-2(N-1)N^2\a-4(N-1),
\]
which is positive by \eqref{eq:Cond1}.
We assume as we may that $2^{N^2-1}\le H^\d$, for otherwise
$H\le 2^{\d^{-1}(N^2-1)}$
and we see that item (a) of the conclusion holds.
Therefore, we have
\[
\prod_{v\in S}{\prod}^{\bullet}|\Lambda_\bullet^{(v)}(\wt y)|_v\le H^{-\d}.
\]

We observe that
\[
H(\Lambda_\bullet^{(0)}(\wt y))
\le 2 \max_{l_1,l_2} |\wt y_{l_1,l_2}|_\infty
\le H^{2(1+\a)(N-1)},
\]
and hence
\[
\prod_{v\in S}{\prod}^\bullet|\Lambda_\bullet^{(v)}(\wt y)|_v
\le H(\Lambda_\bullet^{(0)}(\wt y))^{-\frac{\d}{2(1+\a)(N-1)}}.
\]

This means that the subspace theorem applies and we conclude that there is
a finite collection of linear forms $\Phi_\bullet$ such that $\Phi_j(\wt y)=0$
for some $j$.
It may appear that the set of linear forms $\Phi_\bullet$ depends on $\wt y$,
for the linear forms $\Lambda_\bullet^{(v)}$ were chosen in a manner depending on
it.
However, there are only finitely many possibilities, and if we take $\Phi_\bullet$
to be the union of all linear forms that we obtain from each possible application
of the subspace theorem, then it is independent of $\wt y$.

Now $\Phi_j$ lifts to a nonzero linear form on $\Q^{N\times N}$, and it
induces a non-zero polynomial $P_j\in\Q[x_1,x_2]$ such that
\[
P_j\Big(\frac{a_1s_1}{b_1t_1},\frac{a_2s_2}{b_2t_2}\Big)=0.
\]
We can now apply Proposition \ref{pr:curve} for each polynomial $P_j$ that arises
in this way and we conclude the proof.
\end{proof}

We turn to the proof of Proposition \ref{pr:curve}.
It requires the following simple lemma.

\begin{lem}\label{lm:diff}
Let $y_1\neq y_2\in\Z$, $Q\in\Z_{\neq 0}$ be such that $Q|y_1-y_2$ and $Q$ is not divisible by any primes in $S$.
Then
\[
\prod_{v\in S} \min(|y_1|_v,|y_2|_v)\le \frac{2}{Q} \cdot \prod_{v\in S} |y_1y_2|_v
\]
\end{lem}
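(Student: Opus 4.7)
The plan is to sandwich the auxiliary quantity $\prod_{v\in S}|y_1-y_2|_v$ between an expression involving $\max(|y_1|_v,|y_2|_v)$ on one side and the integer $Q$ on the other. The passage between $\min$ and $\max$ is purely algebraic.

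First I would rewrite the target inequality using the identity $\min(a,b)\cdot\max(a,b)=ab$ for non-negative reals, which gives
\[
\min(|y_1|_v,|y_2|_v)=\frac{|y_1y_2|_v}{\max(|y_1|_v,|y_2|_v)}.
\]
Multiplying over $v\in S$ shows that the claim is equivalent to
\[
Q\le 2\prod_{v\in S}\max(|y_1|_v,|y_2|_v).
\]

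For the upper bound on $\prod_{v\in S}|y_1-y_2|_v$, the ultrametric inequality at each $v\in S_f$ yields $|y_1-y_2|_v\le\max(|y_1|_v,|y_2|_v)$, while the Archimedean triangle inequality gives $|y_1-y_2|_\infty\le 2\max(|y_1|_\infty,|y_2|_\infty)$. Taking the product over $v\in S$ produces
\[
\prod_{v\in S}|y_1-y_2|_v\le 2\prod_{v\in S}\max(|y_1|_v,|y_2|_v),
\]
and the factor $2$ here is exactly the one appearing in the statement. It therefore suffices to prove the lower bound $Q\le\prod_{v\in S}|y_1-y_2|_v$.

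For that bound, I would write $y_1-y_2=QD$ with $D\in\Z\setminus\{0\}$ (nonzero because $y_1\neq y_2$). The hypothesis that $Q$ has no prime factor in $S_f$ gives $|Q|_v=1$ for every $v\in S_f$ and $|Q|_\infty=Q$, so
\[
\prod_{v\in S}|y_1-y_2|_v= Q\cdot|D|_\infty\prod_{v\in S_f}|D|_v.
\]
The product formula for $D\in\Q^\times$ rewrites $|D|_\infty\prod_{v\in S_f}|D|_v$ as $\prod_{v\text{ finite},\,v\notin S_f}|D|_v^{-1}$, which is at least $1$ since $D$ is an integer. This delivers $\prod_{v\in S}|y_1-y_2|_v\ge Q$, and combining with the upper bound finishes the proof. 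I do not anticipate any real obstacle: the only care needed is to locate the factor of $2$ correctly (it comes solely from the Archimedean place) and to invoke the coprimality of $Q$ with $S_f$ at precisely the step where the product formula is used.
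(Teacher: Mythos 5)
Your proof is correct. The first step is identical to the paper's: both of you rewrite $\min(|y_1|_v,|y_2|_v)=|y_1y_2|_v/\max(|y_1|_v,|y_2|_v)$ and reduce the lemma to the lower bound $\prod_{v\in S}\max(|y_1|_v,|y_2|_v)\ge Q/2$. Where you diverge is in how that lower bound is obtained. The paper first normalizes: it divides $y_1$ and $y_2$ by the $S$-part of $\gcd(y_1,y_2)$ (which changes neither hypotheses nor conclusion), after which $\max(|y_1|_v,|y_2|_v)=1$ at every finite $v\in S$, and the entire bound comes from the single Archimedean observation $\max(|y_1|_\infty,|y_2|_\infty)\ge|y_1-y_2|_\infty/2\ge Q/2$. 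You instead keep $y_1,y_2$ as they are, dominate $\prod_{v\in S}|y_1-y_2|_v$ by $2\prod_{v\in S}\max(|y_1|_v,|y_2|_v)$ via the ultrametric and triangle inequalities, and then prove $\prod_{v\in S}|y_1-y_2|_v\ge Q$ by writing $y_1-y_2=QD$ and invoking the product formula (equivalently: $\prod_{v\in S}|n|_v$ is the prime-to-$S$ part of $|n|$, which $Q$ divides). Both arguments are short and standard; yours avoids the normalization step at the cost of one extra appeal to the product formula, and it makes slightly more transparent where the factor $2$ enters (only at the Archimedean place). One cosmetic point: the "equivalence" after dividing by $\prod_v|y_1y_2|_v$ tacitly assumes $y_1y_2\neq0$; in the degenerate case both sides of the lemma vanish, so nothing is lost, but it is worth a word.
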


\begin{proof}
It is clear that neither the assumptions nor the conclusion of the lemma changes if we
divide both $y_1$ and $y_2$ by a divisor of $\gcd(y_1,y_2)$ all of whose prime factors are
in $S$.
For this reason we may assume that $\gcd(y_1,y_2)$ contains no prime factor that is in $S$.

We have
\[
\prod_{v\in S} \min(|y_1|_v,|y_2|_v)= \prod_{v\in S} \frac{|y_1y_2|_v}{\max(|y_1|_v,|y_2|_v)}.
\]
Since $\gcd(y_1,y_2)$ contains no prime factor that is in $S$, we have $\max(|y_1|_v,|y_2|_v)=1$ for all finite places $v\in S$.
In addition, we have $\max(|y_1|_\infty,|y_2|_\infty)\ge Q/2$, because $y_1$ and $y_2$ are distinct integers whose difference
is divisible by $Q$.
Plugging these observations into the above identity, we get the claim of the lemma.
\end{proof}

\begin{proof}[Proof of Proposition \ref{pr:curve}]
Let $\e\in\R_{>0}$, and let $\a\in\R_{>0}$ and $N\in \Z_{>0}$ satisfy
\eqref{eq:Cond1}--\eqref{eq:Cond2}.
Let $P$ be as in Proposition \ref{pr:curve}.
We also fix some $a_1,b_1,a_2,b_2,s_1,t_1,s_2,t_2$ that satisfy all hypotheses
of the proposition and which fail items (b) and (c) of the conclusion.
We aim to show that item (a) of the conclusion holds.

We assume without loss of generality that $P$ is irreducible.
Write $d_1$ and $d_2$ for the degrees of $P$ in $x_1$ and $x_2$ respectively, and let
\[
P(x_1,x_2)=\sum_{j_1=0}^{d_1}\sum_{j_2=0}^{d_2} \a_{j_1,j_2} x_1^{j_1}x_2^{j_2}.
\]
We note that $d_1,d_2\le N-1$ by assumption.

We also assume without loss of generality that $d_1,d_2\ge 1$.
Indeed, if we had $d_2=0$, say, then there would be only finitely many possibilities
for $a_1s_1/b_1t_1$ such that $P(a_1s_1/b_1t_1,\cdot)=0$ holds, and this in turn
restricts $a_1$, $s_1$, $b_1$, $t_1$ to a finite set.
This imposes an upper bound on $\gcd(a_1s_1-b_1t_1,a_2s_2-b_2t_2)$ and hence on $H$
unless $a_1s_1-b_1t_1=0$.
However, this latter case is not possible, because item (c) of the conclusion would
hold with
\[
\Big(\frac{a_1s_1}{b_1t_1}\Big)^{1}=\Big(\frac{a_2s_2}{b_2t_2}\Big)^{0}.
\]
We see that $d_1=0$ or $d_2=0$ implies that item (c) of the conclusion holds, so we can indeed
assume $d_1,d_2\ge 1$

We also note that at least one in each of the four sets $\a_{0,\bullet}$, $\a_{d_1,\bullet}$,
$\a_{\bullet,0}$ and $\a_{\bullet,d_2}$ of coefficients does not vanish.
(Here we used that $P$ is irreducible and $P\neq x_1$ and $P\neq x_2$.)

In what follows, we consider the space $\Q^{2d_1\times 2d_2}$, whose
typical element is denoted by
\[
y=(y_{l_1,l_2})_{l_1=0,\ldots,2d_1-1, l_2=0,\ldots, 2d_2-1}.
\]
For $m_1=0,\ldots,d_1-1$ and $m_2=0,\ldots,d_2-1$, we write
\[
\Psi_{m_1,m_2}(y)=\sum_{j_1=0}^{d_1}\sum_{j_2=0}^{d_2} \a_{j_1,j_2} y_{j_1+m_1,j_2+m_2},
\]
which is a linear form on $\Q^{2d_1\times 2d_2}$.
We observe that a point $(x_1,x_2)\in\Q^2_{\neq 0}$ satisfies $P(x_1,x_2)$ if and only if
\[
\Psi_{m_1,m_2}((x_1^{l_1}x_2^{l_2})_{l_1=0,\ldots,2d_1-1, l_2=0,\ldots, 2d_2-1})=0
\]
holds for at least one, and hence for all $m_1,m_2$ in the relevant range.
We write $V$ for the $3d_1d_2$ dimensional subspace of $\Q^{2d_1\times 2d_2}$
on which all $\Psi_{m_1,m_2}$ vanish.

We consider the point $\wt y\in V$ given by
\[
\wt y_{l_1,l_2}=a_1^{l_1}s_1^{l_1}b_1^{2d_1-1-l_1}t_1^{2d_1-1-l_1}
a_2^{l_2}s_2^{l_2}b_2^{2d_2-1-l_2}t_2^{2d_2-1-l_2}.
\]
To verify that $\Psi_{m_1,m_2}(\wt y)=0$, we note that
\[
b_1^{-2d_1+1}t_1^{-2d_1+1}b_2^{-2d_2+1}t_2^{-2d_2+1}\cdot\wt y_{l_1,l_2}
=\Big(\frac{a_1s_1}{b_1t_1}\Big)^{l_1}\Big(\frac{a_2s_2}{b_2t_2}\Big)^{l_2}.
\]

In what follows, we use the subspace theorem to show that there is a finite collection
$\Phi_\bullet\in V^*_{\neq 0}$ such that $\Phi_j(\wt y)=0$ for some $j$ and
this collection of linear forms is independent of the choice of $a_1,b_1,a_2,b_2,s_1,t_1,s_2,t_2$.
Each $\Phi_j$ can be lifted to a linear form on $\Q^{2d_1\times 2d_2}$, which
is not in the span of the $\Psi_{m_1,m_2}$.
We denote this linear form with the same symbol.
Then the polynomial
\[
Q_j(x_1,x_2)=\Phi_j((x_1^{l_1}x_2^{l_2})_{l_1=0,\ldots,2d_1-1,l_2=0,\ldots,2d_2-1})
\]
is not in the ideal generated by $P$, but
\[
Q_j\Big(\frac{a_1s_1}{b_1t_1},\frac{a_2s_2}{b_2t_2}\Big)=0.
\]
Each such $Q_j$ has only finitely many common solutions with $P$.
This means that the point
\[
\Big(\frac{a_1s_1}{b_1t_1},\frac{a_2s_2}{b_2t_2}\Big)
\]
must belong to a certain finite set, which depends only on $P$ and $S$, and this
means that item (a) in the conclusion holds with some
$C$ that depends only on $P$ and $S$.
This will complete the proof.

The next step is to choose the families of linear forms on $V$ needed for
the application of the subspace theorem.
For each place $v\in S$, we choose a set $\cL_v\subset\{0,\ldots,2d_1-1\}\times\{0,\ldots,2d_2-1\}$
of cardinality $\dim V=3d_1d_2$.
We then define $\Lambda^{(v)}_\bullet$
to be an enumeration of the linear forms
$y\mapsto y_{l_1,l_2}$ for $(l_1,l_2)\in\cL_v$.

Let $i$ be the smallest and let $k$ be the largest index such that $\a_{0,i}\neq0$ and $\a_{d_1,k}\neq0$, respectively.
(Recall that $\a_{j_1,j_2}$ are the coefficients of $P$.)
Each of the sets $\cL_v$ will be either
\begin{equation}\label{eq:form1}
\{0,\ldots,2d_1-1\}\times\{0,\ldots,2d_2-1\}\backslash\{d_1,\ldots,2d_1-1\}\times\{k,\ldots,k+d_2-1\}
\end{equation}
or
\begin{equation}\label{eq:form2}
\{0,\ldots,2d_1-1\}\times\{0,\ldots,2d_2-1\}\backslash\{0,\ldots,d_1-1\}\times\{i,\ldots,i+d_2-1\}.
\end{equation}

We first show that the resulting linear forms $\Lambda_\bullet^{(v)}$ form a basis of $V^*$ in either case.
In fact, we show this only in the case of \eqref{eq:form1}, because the case of \eqref{eq:form2}
can be treated in a similar fashion.
Since $|\eqref{eq:form1}|=\dim V$, it is enough to show that the linear forms $y\mapsto y_{l_1,l_2}$
for $(l_1,l_2)\in\eqref{eq:form1}$ span $V^*$.
To that end, it is enough to show that $y\mapsto y_{l_1,l_2}$ is in the span for all
$(l_1,l_2)\in\{d_1,\ldots,2d_1-1\}\times\{k,\ldots,k+d_1-1\}$.
Fix some $(l_1',l_2')\in\{d_1,\ldots,2d_1-1\}\times\{k,\ldots,k+d_1-1\}$.
We observe that
\[
y_{l_1',l_2'}=-\sum_{{(j_1,j_2)\neq(d_1,k)}}
\frac{\a_{j_1,j_2}}{\a_{d_1,k}} y_{j_1+l_1'-d_1,j_2+l_2'-k}
\]
for all $y\in V$.
This means that $y\mapsto y_{l_1',l_2'}$ is in the span of the linear forms $y\mapsto y_{l_1,l_2}$
for
\[
(l_1,l_2)\in\{0,\ldots,l_1'-1\}\times\{0,\ldots,2d_2-1\}\cup\{l_1'\}\times\{0,\ldots,l_2'-1\}.
\]
Using this observation, we can prove that $(l_1',l_2')$ is in the span of $y\mapsto y_{l_1,l_2}$
for $(l_1,l_2)\in\eqref{eq:form1}$ by induction first on $l_1'$ and then on $l_2'$.

For each $v\in S$, we define $\Lambda_{\bullet}^{(v)}$ using $\eqref{eq:form1}$ if
$|\wt y_{d_1,k}|_v\ge|\wt y_{0,i}|_v$ and we use $\eqref{eq:form2}$ otherwise.
We write $\cA=\eqref{eq:form1}\cap\eqref{eq:form2}$ and $\cB=\eqref{eq:form1}\backslash\eqref{eq:form2}$.
We observe that $\{0,\ldots,2d_1-1\}\times\{0,\ldots,2d_2-1\}$ is the disjoint union of the sets
$\cA$, $\cB$ and $\cB+(d_1,k-i)$.
For each $v\in S$, $\Lambda_\bullet^{(v)}$ contains $y\mapsto y_{l_1,l_2}$ for all $(l_1,l_2)\in\cA$ and
it also contains precisely one of $y\mapsto y_{l_1,l_2}$ or $y\mapsto y_{l_1+d_1,l_2+k-i}$ for each $(l_1,l_2)\in\cB$,
and it contains the one which gives a smaller or equal $|\cdot|_v$ value to $\wt y$.
This means that
\begin{align*}
\prod_{v\in S}{\prod}^{\bullet}|\Lambda_\bullet^{(v)}(\wt y)|_v=&\prod_{v\in S}\prod_{(l_1,l_2)\in \cA} |\wt y_{l_1,l_2}|_v\\
&\times\prod_{v\in S}\prod_{(l_1,l_2)\in\cB}\min(|\wt y_{l_1,l_2}|_v,|\wt y_{l_1+d_1,l_2+k-i}|_v).
\end{align*}
Here $\prod^\bullet$ signifies multiplication over the index suppressed by the $\bullet$ notation. 

We note that $\wt y_{l_1,l_2}\neq \wt y_{l_1+d_1,l_2+k-i}$ for each $(l_1,l_2)\in\cB$
follows from
\[
\Big(\frac{a_1s_1}{b_1t_1}\Big)^{d_1}\neq\Big(\frac{a_2s_2}{b_2t_2}\Big)^{i-k},
\]
which in turn follows from our assumption that item (c) in the conclusion
does not hold.
Therefore, we can apply Lemma \ref{lm:diff} for each pair $\wt y_{l_1,l_2},\wt y_{l_1+d_1,l_2+k-i}$
for $(l_1,l_2)\in\cB$, and get
\[
\prod_{v\in S}{\prod}^{\bullet}|\Lambda_\bullet^{(v)}(\wt y)|_v
\le\Big(\frac{2}{Q}\Big)^{|\cB|}\cdot\prod_{v\in S}\prod_{l_1=0}^{2d_1-1}\prod_{l_2=0}^{2d_2-1} |\wt y_{l_1,l_2}|_v.
\]

We note that
\[
\prod_{v\in S}|\wt y_{l_1,l_2}|_v=|a_1^{l_1}b_1^{2d_1-1-l_1}a_2^{l_2}b_2^{2d_2-1-l_2}|_\infty\le H^{(2d_1+2d_2-2)\a}.
\]
This and $Q\ge H^{\e}$ gives
\[
\prod_{v\in S}{\prod}^{\bullet}|\Lambda_\bullet^{(v)}(\wt y)|_v\le 2^{d_1d_2}H^{4 d_1d_2(2d_1+2d_2-2)\a-d_1d_2\e}.
\]

We write
\[
2\d=\e-8(d_1+d_2-1)\a,
\]
which is positive by assumption \eqref{eq:Cond2}.
We assume as we may that $2\le H^\d$, for otherwise $H\le 2^{1/\d}$,
and item (a)
of the conclusion holds.
We have therefore,
\[
\prod_{v\in S}{\prod}^{\bullet}|\Lambda_\bullet^{(v)}(\wt y)|_v\le H^{-\d d_1d_2}.
\]

We apply the subspace theorem with the linear forms $\Lambda_\bullet^{(v)}$ defined above and with
$\Lambda^{(0)}_\bullet=\Lambda_\bullet^{(\infty)}$, say.
We note that
$\Lambda^{(0)}_\bullet(\wt y)\in\Z^{3d_1d_2}$
and
\[
H(\Lambda^{(0)}_\bullet(\wt y))\le\max_{l_1,l_2}|\wt y_{l_1,l_2}|_\infty\le H^{(2d_1+2d_2-2)(1+\a)}.
\]
We have therefore
\[
\prod_{v\in S}{\prod}^{\bullet}|\Lambda_\bullet^{(v)}(\wt y)|_v
\le H(\Lambda^{(0)}_\bullet(\wt y))^{-\frac{\d d_1d_2}{(2d_1+2d_2-2)(1+\a)}}.
\]
This means that the subspace theorem applies and hence there is a finite collection of linear forms
$\Phi_\bullet\in V^*_{\neq 0}$ such that $\Phi_j(\wt y)=0$ for some $j$.

It may appear that the linear forms $\Phi_\bullet$ depend on $\wt y$, because the choice of $\Lambda^{(v)}_\bullet$ for each $v\in S$ depends
on it.
However, there are only finitely many possibilities we need to consider, so we can simply take the union
of the linear forms that result from each possible application of the subspace theorem.
As we discussed above, this completes the proof.
\end{proof}

\bibliography{bib}

\end{document}